\documentclass[reqno]{amsart}
\usepackage{amsmath}
\usepackage{amssymb}
\usepackage{amsthm}
\usepackage{bussproofs}
\usepackage{enumitem}
\usepackage[backend = biber, style = alphabetic]{biblatex}
\usepackage{comment}
\usepackage{stmaryrd}
\usepackage{mathtools}
\usepackage{tikz-cd}
\usepackage{cmll}
\usepackage{xcolor}
\usepackage{bbold}

\theoremstyle{plain}
\newtheorem{thm}{Theorem}
\newtheorem*{thm*}{Theorem}

\newtheorem{prop}[thm]{Proposition}
\newtheorem{cor}[thm]{Corollary}
\newtheorem*{cor*}{Corollary}

\theoremstyle{definition}
\newtheorem{defn}[thm]{Definition}

{}
\bibliography{references}

\definecolor{cg}{rgb}{0.0, 0.42, 0.24}

\newcommand{\Set}{\mathbf{Set}}

\DeclareMathOperator{\D}{D}
\DeclareMathOperator{\G}{G}

\newcommand{\cf}[1]{\mathbf{#1}}

\newsavebox{\pullback}
\sbox\pullback{%
	\begin{tikzpicture}%
	\draw (0,0) -- (1ex,0ex);%
	\draw (1ex,0ex) -- (1ex,1ex);%
	\end{tikzpicture}}

\DeclareMathOperator{\PR}{P}

\DeclareMathOperator{\Ob}{Ob}
\newcommand{\Cat}{\mathbf{Cat}}
\newcommand{\Pos}{\mathbf{Pos}}

\DeclareMathOperator{\Amp}{\&}

\tikzset{mid vert/.style={/utils/exec=\tikzset{every node/.append style={outer sep=0.8ex}},
		postaction=decorate,decoration={markings,
			mark=at position 0.5 with {\draw[-] (0,#1) -- (0,-#1);}}},
	mid vert/.default=0.75ex}

\begin{document}
\title{Dialectica Categories over Heyting Algebras}
\thanks{An extended abstract of this work appeared under the title \emph{Very Small Dialectica Categories}.}



\providecommand{\NoCaseChange}[1]{#1}
\author[C. Bloomfield, P. Jipsen, and V. de Paiva]{\NoCaseChange{%
\normalfont\upshape
\begin{minipage}[t]{0.33\textwidth}\centering
\textbf{Colin Bloomfield} \\
{\small Independent Researcher} \\
{\small\texttt{colinbloomfield1@gmail.com}}
\end{minipage}%
\begin{minipage}[t]{0.33\textwidth}\centering
\textbf{Peter Jipsen} \\
{\small Chapman University} \\
{\small\texttt{jipsen@chapman.edu}}
\end{minipage}%
\begin{minipage}[t]{0.33\textwidth}\centering
\textbf{Valeria de Paiva} \\
{\small Topos Institute} \\
{\small\texttt{valeria@topos.institute}}
\end{minipage}%
}}

\begin{abstract}
Categorification---the process of constructing a categorical model of a piece of mathematics---often identifies a common abstraction that connects formerly unrelated but known structures. In the case of de Paiva's categorification of G\"odel's Dialectica interpretation, 
we find that its specialization to partial orders produces  (functorial) embeddings of Heyting algebras into residuated lattices that
appear to have been overlooked. For the non-categorical audience, we present this specialization and take care to reproduce the original proofs in the algebraic setting. Along the way we obtain results particular to this algebraic setting: an embedding lacking an evident adjoint in de Paiva's general construction acquires a definable one here; a single Dialectica tensor validates contraction in the intuitionistic construction $\D$ yet refutes it in the classical variant $\G$; and, over $\mathrm{ZF}$, the poset reflection $P\D(\Set)$ collapses onto the four-element algebra $P\D(2)$ exactly when the Axiom of Choice holds.
\end{abstract}

\maketitle

\section{Introduction}

The Dialectica categories of Valeria de Paiva \cite{dePaiva1991} generalize the translation of Intuitionistic Heyting Arithmetic into G\"odel's System T. G\"odel's original intent~\cite{godel1990} was to provide a relative consistency proof of Peano arithmetic. One of the original motivations for de Paiva's categorification  was to extract from the interpretation its categorical structure, which then provided one of the first non-collapsed categorical models of linear logic.

Let $ \cf{C} $ be a category with finite limits. The \textbf{Dialectica Category} $ \D(\cf{C}) $ associated to $ \cf{C} $, has as objects  triples consisting of a pair of objects $ U, X \in \Ob(\cf{C}) $ and a monomorphism $\alpha \colon A \rightarrowtail U \times X $. We write such an object as $ (\alpha, U, X) $.

A morphism from $ (\alpha, U, X) $ to $ (\beta, V, Y) $ consists of a pair of morphisms of $ \cf{C} $, $ (f, F) $, $ f \colon U \to V $, $ F \colon U \times Y \to X $ such that a non-trivial condition is satisfied: pulling $ \alpha $ back along $ \langle \pi_{1},F \rangle $ and $ \beta $ along $ f \times Y $, the first subobject is smaller than the second as in the following diagram
\begin{equation} \label{diag: Dial Morphism}
\begin{tikzcd}[column sep =50]
  & A^{\prime} \arrow[dashed,bend right = 30]{dl}{} \arrow[tail]{d}{\langle \pi_{1}, R \rangle^{-1}(\alpha)} \arrow{r}{} \arrow[dr, phantom, "\usebox\pullback" , pos=.05, color=black] & A \arrow[tail]{d}{\alpha}  \\
B^{\prime} \arrow[dr, phantom, "\usebox\pullback" , pos=.05, color=black] \arrow[tail]{r}{(f \times Y)^{-1}(\beta)} \arrow{d}{}  & U \times Y \arrow{r}{\langle \pi_{1},F \rangle} \arrow{d}{f \times Y} & U \times X \\
B \arrow[tail]{r}{\beta} & V \times Y & 
\end{tikzcd}
\end{equation}

In the category $ \Set $, we can identify subobjects with subsets, and the pullback as the preimage operation. Then the condition becomes: $ (f,F) \colon \alpha \to \beta $ is a morphism of $\D(\Set)$ iff whenever $ (u,F(u,y)) \in \alpha $, then $ (f(u),y) \in \beta $. That is, 
$$
\langle \pi_{1},F \rangle^{-1}(\alpha) \leq (f \times Y)^{-1}(\beta).
$$
We pay special attention throughout to $ \D(\Set) $, both for its historical role and as the jumping-off point for the categorification.

 Composition of morphisms is defined in Dialectica categories, and it is shown that $\D(\cf{C}) $ is a category. 
 If $\cf{C}$ is Cartesian closed (with well-behaved coproducts), then $ \D(\cf{C})$ has sufficient structure, so that the associated logic satisfies all the rules of $ILL$
 . This is no easy task, especially since to capture the proof theory of $ILL$ in $\D(\cf{C})$, the additional operations on $\D(\cf{C})$ need to be natural.

Since Dialectica categories have strong connections to logic and not every logician is a categorical one, an original motivation for this note was to provide a more gentle introduction to Dialectica categories via small models. Just as it is helpful to learn about the finite cyclic groups, dihedral groups, the quaternions, etc.\ when first learning group theory, we ask: Are there compelling finite Dialectica categories that can improve our intuitions?  


The Dialectica construction $\D(\cf{C}) $ requires a category $\cf{C}$ with at least finite limits, and a category with products and two parallel morphisms necessarily has infinitely many objects and so, we consider partial-order categories, i.e.\ those where there is at most one morphism between objects, and isomorphic objects are identified. When $\cf{C}$ is a poset, the additional operations defined on $\D(\cf{C})$ remain compelling, and we no longer need to concern ourselves with the naturality of such operations, since every diagram trivially commutes in a poset category.

There is another reason for considering such categories: In (proof-theoretic) categorical logic, a category $\cf{C}$ provides semantics for a logic as follows: Each atomic formula $ A $ of the logic is assigned to an object of $\cf{C}$, and the connectives of the logic are interpreted as natural operations on the category which inductively assign an object $ [\phi] $ of $\cf{C}$ to each formula $\phi$. 
As a simple example, if your logic of interest has meets, then your category $\cf{C}$ providing semantics needs binary products, and $[\phi \wedge \psi] $ is interpreted as $[\phi] \times [\psi] $.

Then $\phi \vdash \psi $ is said to hold if and only if, for every such assignment of atomic formulas to objects of $\cf{C}$, there is a morphism from $[\phi] $ to $[\psi]$. The naturality of the operations interpreting the connectives ensures that when we take the poset reflection\footnote{The poset reflection is a functor $P \colon \Cat \to \Pos $, which maps each category $\cf{C}$ to a poset $P(\cf{C})$ obtained first by identifying parallel morphisms and then identifying isomorphic objects. One can see that functors $F \colon \cf{C} \to \cf{D}$ respect the resulting equivalence classes, and so $P(F)$ is the result of applying $F$ to representatives of the classes.} of $\cf{C}$, $ P(\cf{C})$, the associated ordered algebra provides semantics for the same logic. The difference is, morphisms of the category are intended to represent proofs, and the algebra captures derivability (or provability), i.e., only the existence of a proof.

Although we lose all proof-theoretic information captured by distinct parallel morphisms, ordered-algebraic semantics provides a powerful and greatly simplified view into the associated logic.
These small models turn out to be more than a teaching aid. Restricting to partial orders, the Dialectica construction becomes a functorial embedding of Heyting algebras into residuated lattices---a specialization that, to our knowledge, has gone unremarked---and the finite examples expose structure that the general theory hides. Three findings stand out: 1) de Paiva's embedding $\iota(u) = (u,u,u)$, which she found to lack an evident adjoint in general, turns out to have a perfectly definable one over a Heyting algebra; 2) a single Dialectica tensor validates contraction in $\D$ yet refutes it in its classical twin $\G$; and 3) most surprisingly, over $\mathrm{ZF}$ the identity $ P \D(2) = P\D(\Set)$ is equivalent to the Axiom of Choice, while its classical analogue $ P \G(2) = P\G(\Set)$ fails outright, even under choice.

\section{Dialectica Over Meet-Semilattices}

Recall that the Dialectica construction $\D(\cf{C}) $  over a category $\cf{C} $ (\cite{dial_cats1989}) requires that the category $\cf{C} $ has binary products and pullbacks. If $\cf{C} $ is a poset, then the categorical product $a \times b $ is necessarily the meet $ a \wedge b $. Moreover, if $ a \leq c $ (i.e.\ $\exists f \colon a \to c) $ and $ b \leq c $ (i.e.\ $\exists g: b \to c$), then the pullback  of $f,g$ is simply $a \wedge b $. Applying these observations to the Dialectica morphism shown in Diagram (\ref{diag: Dial Morphism}), we have:
\begin{defn}
 The Dialectica category over a meet-semilattice $\cf{M}=(M, \leq, \wedge, 1)$, written as $\D (\cf{M})$, is a preorder whose objects are triples $(a, u, x)$ where $a, u, x$ are elements of $\cf{M}$ and $a\leq u\wedge x$. In $\D (\cf{M})$, $(a, u, x) \leq (b, v, y)$ if and only if the following three inequalities hold:
\begin{itemize}
\item[(i)] $u \leq v$
\item[(ii)] $ u \wedge y \leq x$ {and} 
\item[(iii)] $a \wedge (u \wedge y) \leq b\wedge (u \wedge y)$
\end{itemize}   
\end{defn}


Since algebraists prefer to work with ordered, instead of pre-ordered algebras we define the \textit{Dialectica Algebra}, $ D^{\prime}(\cf{M}) $, to be the sub-order of $D(\cf{M}) $ defined as 
\begin{equation*}
    \D^{\prime}(\cf{M}) \coloneqq \{ (a,u,x) \in M : a \leq x \leq u \}. 
\end{equation*}
\begin{prop}\label{prop: PD iso Dprime}
    For $\cf{M} $ a meet-semilattice, $P \D (\cf{M}) $ is order-isomorphic to $ D^{\prime}(\cf{M})$. And so, as categories, $\D^{\prime}(\cf{M}) $ is equivalent to $\D(\cf{M}) $.
\end{prop}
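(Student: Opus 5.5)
The plan is to show that every object of $\D(\cf{M})$ is isomorphic, in the preorder, to a unique element of $\D^{\prime}(\cf{M})$. Since $P\D(\cf{M})$ is by definition the quotient of $\D(\cf{M})$ by mutual $\leq$, this yields the order-isomorphism directly. The categorical equivalence then follows from the observation that the inclusion $\D^{\prime}(\cf{M}) \hookrightarrow \D(\cf{M})$ is full, faithful and essentially surjective.

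\emph{Step 1: a canonical representative.} Given an object $(a,u,x)$ with $a \leq u \wedge x$, I will take the triple $(a, u, u\wedge x)$. It lies in $\D^{\prime}(\cf{M})$ because $a \leq u \wedge x \leq u$. I then check both inequalities (i)--(iii) directly.
\begin{itemize}
\item For $(a,u,x) \leq (a,u,u\wedge x)$: condition (i) is $u \leq u$; condition (ii) is $u \wedge (u \wedge x) \leq x$; and condition (iii) is trivial because the two sides coincide.
\item For $(a,u,u\wedge x) \leq (a,u,x)$: condition (ii) reads $u \wedge x \leq u \wedge x$, and (i) and (iii) are again trivial.
\end{itemize}
Hence $(a,u,x) \cong (a,u,u\wedge x)$ in $\D(\cf{M})$, so every isomorphism class meets $\D^{\prime}(\cf{M})$.

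\emph{Step 2: uniqueness, i.e.\ antisymmetry on $\D^{\prime}(\cf{M})$.} Suppose $(a,u,x)$ and $(b,v,y)$ lie in $\D^{\prime}(\cf{M})$ and are mutually $\leq$. Condition (i) in both directions gives $u = v$. Condition (ii) gives $u \wedge y \leq x$; since $y \leq v = u$, this simplifies to $y \leq x$, and symmetrically $x \leq y$, so $x = y$. For condition (iii), the chain $a \leq x = y \leq u$ makes $a \wedge (u \wedge y) = a$ and $b \wedge (u\wedge y) = b$. Thus (iii) reads $a \leq b$, and symmetrically $b \leq a$. So the two triples are equal.

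\emph{Step 3: conclusion.} The map sending an isomorphism class $[(a,u,x)]$ to $(a,u,u\wedge x)$ is well defined and bijective by Steps 1 and 2. It preserves and reflects order, because the order on $P\D(\cf{M})$ is the one induced on classes from any representatives and $\D^{\prime}(\cf{M})$ carries the sub-order. For the equivalence of categories, the inclusion $\D^{\prime}(\cf{M}) \hookrightarrow \D(\cf{M})$ is full and faithful because both are thin and $\D^{\prime}(\cf{M})$ carries the induced order. It is essentially surjective by Step 1.

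I expect the only step needing care to be Step 2, specifically the simplification of condition (iii). This is exactly where the defining chain $a \leq x \leq u$ of $\D^{\prime}(\cf{M})$ is used. Everything else is routine meet-semilattice manipulation.
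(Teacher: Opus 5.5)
Your proposal is correct and takes essentially the same route as the paper: you use the normal-form representative $(a,u,u\wedge x)$ and show the preorder is antisymmetric on $\D^{\prime}(\cf{M})$, so $\D^{\prime}(\cf{M})$ is a skeleton of $\D(\cf{M})$. You also spell out the checks of conditions (i)--(iii), including the simplification of (iii) via $a \leq x \leq u$, which the paper leaves implicit.
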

\begin{proof}
    Every object of $\D(\cf{M})$ is equivalent to one in $\D^{\prime}(\cf{M})$, since $(a,u,x) \equiv (a,\, u,\, u\wedge x)$ and $a \leq u\wedge x \leq u$. The preorder restricted to $\D^{\prime}(\cf{M})$ is antisymmetric: if $(a,u,x) \leq (b,v,y)$ and conversely, with $a\leq x\leq u$ and $b\leq y\leq v$, then $u=v$, $x=y$, and $a=b$. Hence $\D^{\prime}(\cf{M})$ is a skeleton of $\D(\cf{M})$, order-isomorphic to $P\D(\cf{M})$.
\end{proof}

In the sequel, we will take care to define operations on $ \D(\cf{M})$ so that they restrict to operations on $ \D^{\prime}(\cf{M}) $, allowing us to work with $ \D^{\prime}(\cf{M}) $ instead of $P \D(\cf{M})$.


%

If $\bf{M}$ has a smallest element $ 0 $ then $\top \coloneqq (0,1,0)$ is the top of $\D (\cf{M}) $ and $\bot \coloneqq (0,0,0) $ is a bottom element. Moreover, let $ i \coloneqq (1,1,1) $ and for $ \alpha = (a,u,x) $, and $\beta = (b,v,y) $, define $\alpha \otimes \beta \coloneqq (a \wedge b, u \wedge v, x \wedge y)$. Observe that $ (\D(\cf{M}),\otimes, i) $ is an ordered commutative monoid or in categorical language, $(\otimes, i)$ a symmetric monoidal tensor product.

Moreover, since the tensor product $\otimes $ is defined on objects via coordinate-wise meets, it is immediate that the associated logic satisfies the following inference rules:
\begin{equation*}
	\AxiomC{$ A, \Gamma \vdash B $} 
        \RightLabel{dupl}
	\UnaryInfC{$ A,A, \Gamma \vdash B $}
	\DisplayProof
	\hskip 1.5 em
	\AxiomC{$ A,A, \Gamma \vdash B $}
        \RightLabel{cont}
	\UnaryInfC{$A, \Gamma \vdash B$}
	\DisplayProof
\end{equation*}

The first rule is a restricted form of weakening we call duplication, and the second rule is contraction. Just as important are the rules the logic doesn't satisfy. We will see that the two-element Boolean algebra $\mathbf{2} $, $ \D(\mathbf{2})$ shown in Figure \ref{fig: D(2)} fails to satisfy the following more general rule of weakening:
\begin{equation*}
	\AxiomC{$ A, \Gamma \vdash B $} 
        \RightLabel{weak}
	\UnaryInfC{$ A,C, \Gamma \vdash B $}
	\DisplayProof
\end{equation*}
And so, the logics associated to classes of $\D^{\prime}(\cf{M})$ algebras seem to be relevance logics: That is, antecedents cannot be exhausted, but they need to play a role in the derivation of the consequent.

We would now like a linear implication $\multimap$ in the Dialectica preorder. In the categorical semantics, what is needed is for the tensor $\otimes$ on $\D(\cf{C})$ to be closed, and in \cite{dePaiva1991}, it is shown this holds whenever $\cf{C} $ is Cartesian closed. The order theoretic version of a Cartesian closed category is a residuated meet-semilattice $\cf{R}$. That is, $\cf{R} $ is a meet-semilattice such that for all $ x,y,z \in R $ the following condition is satisfied:
\begin{equation*}
    x \wedge y \leq z \quad \iff \quad y \leq x \rightarrow z.
\end{equation*}

\begin{prop}\label{prop: internal hom}
    The operation $\multimap:(\D \cf{R})^{op}\times \D \cf{R} \to \D \cf{R}$, defined for $\beta = (b, v, y) $ and $\gamma = (c, w, z) $ by
    \begin{equation*}
        \beta \multimap \gamma \coloneqq ((b \rightarrow c) \wedge s \wedge t, s, s \wedge t)
    \end{equation*}
    where $s = (v \rightarrow w) \wedge ((v \wedge z) \rightarrow y)$ and $ t = v \wedge z $ is the internal-hom in $\D (\cf{R})$. I.e.,  each $ \alpha \otimes {(\cdot)}$ in $\D(\cf{R}) $ is residuated with residual $\alpha \multimap {(\cdot)} $.
\end{prop}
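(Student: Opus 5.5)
We must show that $\beta\multimap\gamma$ is a well-formed object of $\D(\cf{R})$, and that for every $\alpha=(a,u,x)$,
\[
\alpha\otimes\beta\le\gamma \iff \alpha\le\beta\multimap\gamma .
\]
Well-formedness is immediate. The third coordinate is $s\wedge t$, and the first coordinate $(b\to c)\wedge s\wedge t$ lies below $s\wedge(s\wedge t)$. In fact the object already lies in $\D^{\prime}(\cf{R})$, since its first coordinate is below $s\wedge t$, which is below $s$. Contravariance in $\beta$ and covariance in $\gamma$ then follow formally from residuation, so they need no separate check.

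The plan is to unfold both sides into the three conditions (i)--(iii) of the definition and compare them clause by clause. The only tools are the residuation law $p\wedge q\le r \iff q\le p\to r$ and the standard Heyting identities $p\wedge(p\to q)=p\wedge q$ and $p\to(q\to r)=(p\wedge q)\to r$.

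\emph{Left side.} Here $\alpha\otimes\beta=(a\wedge b,\,u\wedge v,\,x\wedge y)\le(c,w,z)$ amounts to three conditions:
\begin{enumerate}[label=(L\arabic*)]
\item $u\wedge v\le w$;
\item $u\wedge v\wedge z\le x\wedge y$;
\item $a\wedge b\wedge u\wedge v\wedge z\le c$.
\end{enumerate}
For (L3) we use that $a\wedge b\le u\wedge v\wedge\ldots$ absorbs the meet with $c$ on the right.

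\emph{Right side.} Here $(a,u,x)\le((b\to c)\wedge s\wedge t,\,s,\,s\wedge t)$ amounts to three conditions:
\begin{enumerate}[label=(R\arabic*)]
\item $u\le s$;
\item $u\wedge s\wedge t\le x$;
\item $a\wedge u\wedge s\wedge t\le (b\to c)\wedge s\wedge t$.
\end{enumerate}

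First, (R1) is equivalent to (L1) together with $u\wedge v\wedge z\le y$, which is half of (L2). This is just residuation applied to each conjunct of $s$. Next, assuming (R1), the meet $u\wedge s$ collapses to $u$. So (R2) becomes $u\wedge v\wedge z\le x$, which is the other half of (L2). Similarly (R3) becomes $a\wedge u\wedge t\le b\to c$, that is $a\wedge b\wedge u\wedge v\wedge z\le c$, which is (L3). Here we again use that $a\wedge u\wedge t\le s\wedge t$ is automatic once (R1) holds.

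Conversely, (L1) and (L2) give (R1) via residuation, and then the same rewrites give (R2) and (R3).

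The main obstacle is bookkeeping in clause (iii): getting the right-hand side $b\wedge(\ldots)$ versus $c$ correct, and justifying that the extra meets with $s$ and $t$ may be added or dropped. The key step is to establish (R1) first and use it to simplify $u\wedge s=u$ before treating (R2) and (R3). This ordering is what makes the clause-by-clause match go through; the rest is routine.
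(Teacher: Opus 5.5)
Your proof is correct and follows the paper's argument: you unfold $\alpha\otimes\beta\le\gamma$ and $\alpha\le\beta\multimap\gamma$ into three inequalities each, match them clause by clause via residuation, and use $u\le s$ to settle the third clause. The only cosmetic difference is that you work directly with the stated representative $(a',s,s\wedge t)$ and collapse $u\wedge s=u$, whereas the paper first passes to the equivalent representative $(a',s,t)$.
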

\begin{proof}
    By Proposition~\ref{prop: PD iso Dprime} the internal-hom is determined only up to equivalence, and $\beta \multimap \gamma \equiv (a', s, t)$ with $a' = (b\rightarrow c)\wedge s\wedge t$; we verify the adjunction for this representative. As $\D(\cf{R})$ is a preorder, it suffices to show $\alpha \otimes \beta \leq \gamma \iff \alpha \leq (a', s, t)$ for every $\alpha = (a,u,x)$. The left-hand side unfolds to
    \begin{equation*}
        u\wedge v \leq w, \qquad u\wedge v\wedge z \leq x\wedge y, \qquad a\wedge b\wedge u\wedge v\wedge z \leq c,
    \end{equation*}
    and the right-hand side to $u \leq s$, $\ u\wedge t \leq x$, and $a\wedge u\wedge t \leq (b\rightarrow c)\wedge s$. Residuation makes these equivalent: $u\wedge v\leq w$ together with $u\wedge v\wedge z\leq y$ say exactly $u \leq (v\rightarrow w)\wedge((v\wedge z)\rightarrow y) = s$; $u\wedge v\wedge z\leq x\wedge y$ gives $u\wedge t\leq x$; and $a\wedge b\wedge u\wedge v\wedge z\leq c$ reads $a\wedge u\wedge t\leq b\rightarrow c$, which with $u\leq s$ yields the third condition.
\end{proof}

Consider $ \cf{R} = \mathbf{2} $. For simplicity, we write $ aux $ for $ (a,u,x) $. The preorder of $ \D(\mathbf{2}) $ is shown in Figure \ref{fig: D(2)}:
\begin{figure}
$$
\begin{tikzcd}[row sep = 15]
\top= 010 &  \\
i= 111 \ar{u} &   \\
a=011 \ar{u} &  \\
\bot= 001 \ar{u} \ar["\sim"]{r} & 000 \ar{ul} \ar{l}
\end{tikzcd}
$$ 
\caption{The residuated meet-semilattice $ P \D (\mathbf{2})$.}\label{fig: D(2)}
\end{figure}
 If we identify isomorphic objects and let $ \bot = 000$, $a = 011$, $ i = 111$ and $\top = 010$, we have the following operation table for $P\D(2) = \D^{\prime}(2)$:
\begin{equation} \label{eqn: min noncom mult table}
\begin{array}{c | c c c c}
\otimes     & \bot & a & i & \top \\
\hline  \bot & \bot & \bot & \bot & \bot \\
a & \bot & a & a & \top \\
i & \bot & a & i & \top \\
\top & \bot & \top & \top & \top
\end{array}
\hskip 3 em
\begin{array}{c | c c c c}
\multimap    & \bot & a & i & \top \\
\hline       \bot & \top & \top & \top & \top \\
a & \bot & i & i & \top \\
i & \bot & a & i & \top \\
\top & \bot & \bot & \bot & \top
\end{array}
\end{equation}
The result is a four-element commutative, idempotent, involutive, residuated lattice, which is not integral, known as the Sugihara algebra $S_4$. \cite{sugihara1955}

Being a four-element chain, $\D^{\prime}(\mathbf{2})$ is in particular a Heyting algebra, and one may wonder whether every $\D^{\prime}(\cf{H})$ is. Already the three-element chain refutes this: $\D^{\prime}(\cf{3})$ is a ten-element, non-distributive lattice, hence not a Heyting algebra, as we confirmed by direct computation.

Moreover, assuming choice, $\D^{\prime}(\mathbf{2}) $ has the following striking connection to $\D (\Set) $, the motivating example for the original categorification:
\begin{thm}\label{PD(SET) = PD(2)}
    Over $\mathrm{ZF} $, $\D^{\prime}(2) = P \D(\Set) $ is equivalent to the Axiom of Choice.
\end{thm}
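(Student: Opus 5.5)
The plan is to compare $P\D(\Set)$ with the image of the evident embedding of $\D(\mathbf{2})$ into $\D(\Set)$. The embedding identifies $\mathbf{2}$ with the subsets of a singleton $1=\{*\}$. An object $(a,u,x)$ of $\D(\mathbf{2})$ then literally is an object $(\alpha,U,X)$ of $\D(\Set)$ with $U,X\subseteq 1$ and $\alpha\subseteq U\times X$. Since there is at most one function between such sets, the Dialectica condition in $\Set$ reduces exactly to conditions (i)--(iii). So the four classes $\bot,a,i,\top$ of $\D^{\prime}(\mathbf{2})$ embed order-preservingly and order-reflectingly into $P\D(\Set)$. The statement ``$\D^{\prime}(\mathbf{2})=P\D(\Set)$'' then amounts to this embedding being surjective on equivalence classes.

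The first step is to compute, for an arbitrary object $(\alpha,U,X)$, its comparisons with these four representatives directly from the $\Set$ description of morphisms.
\begin{itemize}
\item If $U=\emptyset$, the object is equivalent to $\bot$.
\item If $U\neq\emptyset$ and $X=\emptyset$, it is equivalent to $\top=(\emptyset,1,\emptyset)$. The map $F\colon U\times\emptyset\to X$ is empty, and conversely any map into $(\alpha,U,\emptyset)$ needs its $Y$-component empty.
\item Now let $U,X\neq\emptyset$. Then $(\alpha,U,X)\le i$ always, using a constant $f$ and any $F$. Also $a=(\emptyset,1,1)\le(\alpha,U,X)$ always, since the premise is vacuous.
\item Next, $i\le(\alpha,U,X)$ holds iff some $u_0$ satisfies $(u_0,x)\in\alpha$ for all $x$.
\item Finally, $(\alpha,U,X)\le a$ holds iff there is a function $F\colon U\to X$ with $(u,F(u))\notin\alpha$ for all $u$. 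That is, iff the family of nonempty sets $X_u=\{x:(u,x)\notin\alpha\}$ admits a choice function.
\end{itemize}

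For the direction AC $\Rightarrow$ equality, take $U,X$ nonempty. Either some $u_0$ has $X_{u_0}=\emptyset$, and then the object is equivalent to $i$. Or every $X_u$ is nonempty, and then AC supplies $F$, making the object equivalent to $a$. Hence every class lies in the image of the embedding.

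For the converse, suppose AC fails. Choose a nonempty index set $U$ and a family $(X_u)_{u\in U}$ of nonempty sets with no choice function. Put $X=\bigcup_u X_u$ and $\alpha=\{(u,x): x\notin X_u\}$. The object $(\alpha,U,X)$ is then not equivalent to any of the four representatives:
\begin{itemize}
\item it is not $\bot$, since $U\neq\emptyset$;
\item it is not $\top$, since $\top\le(\alpha,U,X)$ would force $X=\emptyset$;
\item it is not $i$, since no $X_{u}$ is empty;
\item it is not $a$, since there is no choice function.
\end{itemize}
So $P\D(\Set)$ properly contains $\D^{\prime}(\mathbf{2})$.

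The main point needing care is the bookkeeping in the first step. For each of the four comparisons I must unwind exactly which $f$ and $F$ exist, especially the degenerate cases with empty $U$, $X$ or $Y$. I must also confirm that the required $F$ really amounts to a choice function and nothing weaker, since $f$ into a singleton is forced. Everything else is a direct reading of the $\Set$ form of the morphism condition stated in the introduction.
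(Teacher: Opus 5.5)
Your proposal is correct and follows essentially the same route as the paper: the same embedding of $\D(\mathbf{2})$ via subsets of a singleton, the same four-way case split on $(\alpha,U,X)$, AC supplying the backward map $U\to X$ for the $(0,1,1)$ class, and the same witness $X=\bigcup_u X_u$, $\alpha=\{(u,x): x\notin X_u\}$ for the converse. The differences are cosmetic: you argue the converse contrapositively by exhibiting a fifth equivalence class, and you explicitly verify the comparisons the paper leaves as a ``straightforward exercise'', including why the witness is inequivalent to $\bot$, $\top$ and $i$.
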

\begin{proof}
    By Proposition~\ref{prop: PD iso Dprime}, $ \D^{\prime}(2) = \PR\D(2) $, so it suffices to show that $ \PR\D(\Set) = \PR\D(2) $ if and only if $\Set$ satisfies choice. Let $1 = \{ \emptyset \} $, and overload $1$ to also denote $ \{ ( \emptyset, \emptyset) \} $ when specifying the Dialectica object $ (\alpha, U, X) = (1,1,1) $.

    First observe that the Dialectica objects in $ \D(\Set) $ of the form $ \{ (\alpha,U,X) : U,X \in \{0,1\} \} $ form a sub-Dialectica category isomorphic to $ \D(2) $: the categorical operations on $0$ and $1$ (exponentials, pullbacks, products, and coproducts) as sets correspond exactly to the logical operations on them as members of the two-element Boolean algebra.

    Given this, it suffices to show that every element of $ \D(\Set) $ is equivalent in $ \PR\D(\Set) $ to one of the objects in this sub-Dialectica category. Each $ (\alpha,U,X) $ of $ \D(\Set) $ falls into one of four classes:
    \begin{enumerate}
        \item the class of $ (0,1,0) $: all $ (\alpha,U,X) $ with $ U \neq \emptyset $ and $ X = \emptyset $;
        \item the class of $ (1,1,1) $: all $ (\alpha,U,X) $ with $ U \neq \emptyset $, $ X \neq \emptyset $, and $ \exists u \in U $ such that $ (u,x) \in \alpha $ for all $ x \in X $;
        \item the class of $ (0,1,1) $: all $ (\alpha, U, X) $ with $ U \neq \emptyset $, $ X \neq \emptyset $, and for all $ u \in U $ there is $ x \in X $ with $ (u,x) \notin \alpha $;
        \item the class of $ (0,0,0) $: all $ (\alpha, U, X) $ with $ U = \emptyset $.
    \end{enumerate}
    For classes 1, 2, and 4 it is a straightforward exercise to show, over $ \mathrm{ZF} $ alone, that an arbitrary element of each class lies in the same element of $ \PR\D(\Set) $ as its specified representative. For $ (\alpha, U, X) $ in class~(3) there is a morphism $ (f,F) \colon (0,1,1) \to (\alpha, U, X) $ over $ \mathrm{ZF} $: since $ U \neq \emptyset $ there is $ f \colon 1 \to U $, there is always a function $ F \colon 1 \times X \to 1 $, and since the relation on $ (0,1,1) $ is trivial the subobject condition holds. It remains to show that, for all $ (\alpha, U, X) $ of class~(3), a morphism $ (f, F) \colon (\alpha, U, X) \to (0,1,1) $ exists if and only if choice holds.

    Suppose the Axiom of Choice holds, and let $ (\alpha, U, X) $ be an arbitrary object of class~3. By choice there is a function $ F \colon U \times 1 \to X $ with $ (u, F(u,0)) \notin \alpha $ for all $ u \in U $. Then $ (f, F) \colon (\alpha, U, X) \to (0,1,1) $.

    Conversely, suppose $ \PR\D(\Set) \cong \PR\D(2) $, so that each element of class~3 represents the same object of $ \PR\D(\Set) $ as $ (0,1,1) $. Let $ \{ X_u \}_{u \in U} $ be an arbitrary family of non-empty sets, put $ X = \bigcup_{u \in U} X_u $ and $ \alpha = \{ (u, x) : u \in U,\ x \in X,\ x \notin X_u \} $. Since each $ X_u $ is non-empty, for every $ u \in U $ there is $ x \in X $ with $ (u, x) \notin \alpha $, so $ (\alpha, U, X) $ is of class~(3); hence there is a Dialectica morphism $ (f, F) \colon (\alpha, U, X) \to (0,1,1) $. For $ (f,F) $ to satisfy the subobject condition, $ F(\,\cdot\,, 0) \colon U \to X $ must be a choice function.
\end{proof}
\begin{cor}
    Assuming choice, the logic associated to $\D(\Set) $ is the logic associated to $S_4$. As a consequence, the logic $\vdash_{\D(\Set)}$ is strictly stronger than $ILL$.
\end{cor}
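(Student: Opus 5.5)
Assuming choice, Theorem~\ref{PD(SET) = PD(2)} identifies $\PR\D(\Set)$ with $\D^{\prime}(\mathbf{2})$. So the plan is to show that this identification also respects the connectives. Then validity in $\D(\Set)$ coincides with validity in the Sugihara algebra $S_4$.

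First we check that the poset reflection $\PR$ turns the symmetric monoidal closed structure of $\D(\Set)$ into operations on $\PR\D(\Set)$. This is the general remark in the introduction: the tensor, unit, internal hom (and additives, if the logic includes them) are functorial, so they respect isomorphism classes. Next we check that the embedding of $\D(\mathbf{2})$ as the sub-Dialectica category on $U,X\in\{0,1\}$, from the proof of Theorem~\ref{PD(SET) = PD(2)}, is closed under these operations. It should agree with the operations defined in Proposition~\ref{prop: internal hom} and with coordinatewise $\otimes$. Products, exponentials and pullbacks of the sets $0,1$ are computed exactly as meets and implications in $\mathbf{2}$, and the formulas in Proposition~\ref{prop: internal hom} are the poset shadow of de Paiva's Set formulas. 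Since every object is isomorphic to one in this subcategory, $\PR\D(\Set)\cong S_4$ as ordered algebras with $\otimes,\multimap,i$ (and lattice operations). Hence a sequent holds under every assignment into $\D(\Set)$, up to existence of a morphism, iff it holds in $S_4$. An assignment into $\D(\Set)$ factors up to isomorphism through $\D(\mathbf{2})$, and conversely $\D(\mathbf{2})\subseteq\D(\Set)$.

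For strictness, soundness of $ILL$ in $\D(\Set)$ (de Paiva, since $\Set$ is Cartesian closed) gives that $\vdash_{\D(\Set)}$ contains $ILL$. Then we need one sequent valid in $S_4$ but not in $ILL$. Contraction $A\vdash A\otimes A$ works: it holds since $\otimes$ is idempotent in the table~(\ref{eqn: min noncom mult table}), and it is not derivable in $ILL$. This can be certified by cut-elimination, or by a countermodel such as a commutative residuated lattice that is not idempotent, e.g.\ $\mathbb{Z}$ with $+$.

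The main obstacle is the second step. We must make sure that the internal hom and tensor of $\D(\Set)$ really restrict on the $\{0,1\}$ objects to the operations of $S_4$, and that the choice of representative does not matter. The first thing to try is a direct computation on the four representatives $(0,1,0),(1,1,1),(0,1,1),(0,0,0)$, comparing with the table. Invariance under isomorphism then follows from functoriality.
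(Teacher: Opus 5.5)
Your proposal is correct and follows the paper's route. Theorem~\ref{PD(SET) = PD(2)} identifies $\PR\D(\Set)$ with $S_4$, de Paiva's soundness theorem gives that $\D(\Set)$ validates $ILL$, and an $S_4$-valid sequent such as contraction gives strictness; your check that de Paiva's $\otimes$ and $\multimap$ on the $\{0,1\}$-objects reduce, up to equivalence, to the $S_4$ tables makes explicit a step the paper leaves implicit.

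Two small fixes. First, objects of class~(3) are only \emph{equivalent} to $(0,1,1)$ in $\D(\Set)$ (morphisms both ways), not isomorphic, and equivalence is all your functoriality argument needs. Second, the cut-elimination argument is the cleaner certificate that $A \vdash A\otimes A$ is not derivable, since $(\mathbb{Z},+)$ lacks the additive units and $!$.
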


The comparison with $ILL$ here rests on de Paiva's theorem that every Dialectica category is a model of intuitionistic linear logic \cite{dePaiva1991}: $\D(\Set)$ therefore validates at least $ILL$, and its identification with $S_4$ shows it validates strictly more. Having drawn this consequence, we resume the order-theoretic development, reconstructing the remaining connectives---the additive conjunction and the ``of course'' modality---directly on $\D(\cf{H})$, and verifying by hand the structure the general theory supplies categorically.

When $\cf{R} = \cf{H} $ is a Heyting algebra, $\D(\cf{H}) $ moreover has finite meets: the top $\top = (0,1,0)$ is the empty meet, and the binary meet is given as follows.
\begin{prop}\label{prop: meet}
    For $\cf{H}$ a Heyting algebra, $\alpha = (a,u,x)$, $\beta = (b,v,y)$, and $w = u\wedge v$, the binary meet in $\D(\cf{H})$ is
    \begin{equation*}
        \alpha \Amp \beta = \bigl(((w \wedge x) \rightarrow a ) \wedge ((w \wedge y)\rightarrow b) \wedge w \wedge (x \vee y),\ w,\ w \wedge (x \vee y)\bigr).
    \end{equation*}
\end{prop}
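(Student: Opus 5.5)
The plan is a direct verification: check that the displayed triple is an object of $\D(\cf{H})$, that it is a lower bound of $\alpha$ and $\beta$, and that it lies above every common lower bound. Since $\D(\cf{H})$ is only a preorder, this characterizes the meet up to the equivalence of Proposition~\ref{prop: PD iso Dprime}, which is all that is claimed. Write $t = w\wedge(x\vee y)$ and $m = ((w\wedge x)\rightarrow a)\wedge((w\wedge y)\rightarrow b)\wedge t$, so that $\alpha\Amp\beta = (m,w,t)$. Being an object means $m\leq w\wedge t$, which holds because $m\leq t\leq w$. In fact $m\leq t\leq w$, so the triple already lies in $\D^{\prime}(\cf{H})$, and the operation restricts as promised.

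For $(m,w,t)\leq\alpha$ I check the three clauses of the definition in order.
\begin{itemize}
\item[(i)] $w\leq u$ holds trivially.
\item[(ii)] $w\wedge x\leq t$ holds since $x\leq x\vee y$.
\item[(iii)] We need $m\wedge w\wedge x\leq a\wedge w\wedge x$. Because $m$ contains the conjunct $(w\wedge x)\rightarrow a$, residuation (modus ponens) gives $m\wedge w\wedge x\leq a$.
\end{itemize}
The inequality $(m,w,t)\leq\beta$ follows symmetrically, using the conjunct $(w\wedge y)\rightarrow b$.

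For maximality, let $\gamma=(c,z,s)$ satisfy $\gamma\leq\alpha$ and $\gamma\leq\beta$; we must show $\gamma\leq(m,w,t)$.
\begin{itemize}
\item[(i)] From $z\leq u$ and $z\leq v$ we get $z\leq w$.
\item[(ii)] The hypotheses give $z\wedge x\leq s$ and $z\wedge y\leq s$. This is the one place where a Heyting algebra, rather than a residuated meet-semilattice, is needed. Distributivity of $\wedge$ over $\vee$ (automatic in a Heyting algebra because $z\wedge(\cdot)$ is a left adjoint) gives $z\wedge(x\vee y)\leq s$. Since $z\leq w$, we have $z\wedge t=z\wedge(x\vee y)\leq s$.
\item[(iii)] We need $c\wedge z\wedge t\leq m\wedge z\wedge t$, and it suffices to bound $c\wedge z\wedge t$ by each conjunct of $m$. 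The bound by $t$ is immediate. For $(w\wedge x)\rightarrow a$, residuation reduces it to $c\wedge z\wedge t\wedge w\wedge x\leq a$. The left side is at most $c\wedge z\wedge x$, and the third clause of $\gamma\leq\alpha$ gives $c\wedge z\wedge x\leq a$. The conjunct $(w\wedge y)\rightarrow b$ is handled identically using $\gamma\leq\beta$.
\end{itemize}

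I expect the only genuine obstacle to be the second coordinate of the maximality step. There the join $x\vee y$ must be produced, and distributivity is essential, which explains why the proposition is stated for Heyting algebras rather than for the residuated meet-semilattices of Proposition~\ref{prop: internal hom}. Everything else is routine unfolding of the three defining inequalities together with residuation.
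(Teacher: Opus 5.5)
Your proof is correct and follows the paper's argument essentially step for step: you check the lower-bound inequalities via modus ponens, then prove maximality using distributivity for the second clause and residuation for the third. Your extra check that the triple lies in $\D^{\prime}(\cf{H})$ is a harmless addition. One small quibble with your closing remark: in any residuated meet-semilattice, distributivity over an existing join is automatic because $z \wedge (\cdot)$ is a left adjoint, so what the Heyting hypothesis really supplies here is the existence of the join $x \vee y$, not distributivity as such.
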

\begin{proof}
    First $\alpha\Amp\beta \leq \alpha$: we have $w\leq u$, $\ w\wedge x \leq w\wedge(x\vee y)$, and the first-coordinate condition holds because $((w\wedge x)\rightarrow a)\wedge(w\wedge x)\leq a$. By symmetry $\alpha\Amp\beta\leq\beta$.

    Now suppose $\gamma = (r,s,t)$ satisfies $\gamma\leq\alpha$ and $\gamma\leq\beta$; unwinding, $s\leq u\wedge v$, $\ s\wedge x\leq t$, $\ s\wedge y\leq t$, and $r\wedge s\wedge x\leq a$, $\ r\wedge s\wedge y\leq b$. Then $\gamma\leq\alpha\Amp\beta$: the first condition is $s\leq w$; the second, $s\wedge w\wedge(x\vee y)\leq t$, follows since $s\wedge(x\vee y) = (s\wedge x)\vee(s\wedge y)\leq t$ by distributivity; and for the third it suffices, by symmetry, to show $r\wedge s\wedge w\wedge(x\vee y)\leq (w\wedge x)\rightarrow a$, i.e.\ $r\wedge s\wedge w\wedge x\leq a$, which is immediate from $r\wedge s\wedge x\leq a$.
\end{proof}

Now we would like to search for a suitable operation $ {!} \colon {\D(\cf{H}) \to \D(\cf{H})} $ to interpret the ``of course'' modality of intuitionistic linear logic. 
Since such an operation must be an interior operator---the algebraic specialization of a comonad---a natural first place to look is among monoidal embeddings $ \iota \colon \cf{H} \to \D(\cf{H}) $ together with a right adjoint $ f $, from which one forms $ {!} \coloneqq \iota \circ f $. Two such embeddings arise as specializations to $ \D(\cf{H}) $ of observations made for $ \D(\cf{C}) $ with $ \cf{C} $ Cartesian closed in Section~1.4 of \cite{dePaiva1991}.

The first is $ \iota(a) = (0,a,1) $, which carries meets to the tensor, since $ \iota(a \wedge b) = (0, a \wedge b, 1) = \iota(a) \otimes \iota(b) $, and admits the right adjoint $ f(a,u,x) = u $, as $ (0,u,1) \leq (b,v,y) $ iff $ u \leq v $. The induced operation $ !(a,u,x) \coloneqq \iota \circ f(a,u,x) = (0,u,1) $ is an interior operator, and it even satisfies weakening. However, the embedding $ \iota $ fails to preserve the unit, $ \iota(1) = (0,1,1) \neq i $, so $ {!}\,i = (0,1,1) \neq i $, and thus promotion fails: from $ \vdash i $ one cannot derive $ \vdash {!}\,i $.

The second is $ \iota(u) = (u,u,u) $, which is strong monoidal, carrying meets to the tensor and top to the unit ($ \iota(1) = (1,1,1) = i $). Of this embedding de Paiva remarks that it ``does not seem to have an easy adjoint'' \cite[\S1.4]{dePaiva1991}. In the order-theoretic specialization, however, residuation supplies one.
\begin{prop}\label{prop: G adjoint}
    Let $ \cf{H} $ be a Heyting algebra. The embedding $ \iota \colon \cf{H} \to \D(\cf{H}) $, $ \iota(w) = (w,w,w) $, has a right adjoint $ f \colon \D(\cf{H}) \to \cf{H} $ given by $ f(a,u,x) = u \wedge (x \rightarrow a) $.
\end{prop}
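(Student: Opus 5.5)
Since $\D(\cf{H})$ is a preorder and $\cf{H}$ is a poset, having a right adjoint means exactly the Galois condition: for every $w \in H$ and every $\alpha = (a,u,x)$,
\begin{equation*}
\iota(w) \leq \alpha \iff w \leq u \wedge (x \rightarrow a).
\end{equation*}
So the plan is to unfold the left side with the definition of the order on $\D(\cf{M})$ and then simplify using residuation. Monotonicity of $f$ is automatic once the equivalence holds, because any map satisfying a Galois condition is monotone. I will therefore not check monotonicity separately. This also avoids worrying about whether $f$ is well defined on equivalence classes, since it is forced to respect the preorder.

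Unfolding $(w,w,w) \leq (a,u,x)$ gives three conditions: (i) $w \leq u$; (ii) $w \wedge x \leq w$, which always holds; and (iii) $w \wedge (w \wedge x) \leq a \wedge (w \wedge x)$. Condition (iii) simplifies to $w \wedge x \leq a \wedge w \wedge x$, which is equivalent to $w \wedge x \leq a$. By residuation this is $w \leq x \rightarrow a$. Hence the left side amounts to $w \leq u$ together with $w \leq x \rightarrow a$, that is, $w \leq u \wedge (x \rightarrow a)$, which is the right side.

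There is no real obstacle. The only step needing care is (iii): the order compares $a \wedge (u \wedge y)$ with $b \wedge (u \wedge y)$, and here the roles are taken by $u \mapsto w$, $y \mapsto x$, $a \mapsto w$, $b \mapsto a$, so the instantiation must be written out correctly. It is also worth remarking that $\iota(w)$ is a legitimate object, because $w \leq w \wedge w$, and in fact it lies in $\D^{\prime}(\cf{H})$.

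Finally, I would note what this gives: $! \coloneqq \iota \circ f$, that is, $!(a,u,x) = (f\alpha, f\alpha, f\alpha)$, is an interior operator. Checking this belongs to the subsequent discussion, not to this proof.
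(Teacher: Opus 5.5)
Your proof is correct and is essentially the paper's argument: you unfold $(w,w,w)\leq(a,u,x)$ into $w\leq u$ and $w\wedge x\leq a$ (the middle clause being trivial), then use residuation to rewrite the latter as $w\leq x\rightarrow a$. Your side remarks are accurate and harmless: monotonicity of $f$ is forced by the Galois condition, and $\iota(w)$ is a legitimate object lying in $\D^{\prime}(\cf{H})$.
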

\begin{proof}
    For $ w \in \cf{H} $, the inequality $ (w,w,w) \leq (a,u,x) $ holds iff $ w \leq u $ and $ w \wedge x \leq a $; by residuation the latter is $ w \leq (x \rightarrow a) $. Hence $ (w,w,w) \leq (a,u,x) $ iff $ w \leq u \wedge (x \rightarrow a) = f(a,u,x) $.
\end{proof}
The induced comonad $ {!}_{\iota} \coloneqq \iota \circ f $ is then a modality on $ \D(\cf{H}) $,\footnote{The third coordinate of $ (t,t,t) $ is determined only up to the preorder of $ \D(\cf{H}) $: one has $ (t,t,t) \equiv (t,t,x') $ for every $ x' \geq t $, and we take the normal-form representative with $ a \leq x \leq u $.}
\begin{equation*}
    {!}_{\iota}(a,u,x) = (t,t,t), \qquad t = u \wedge (x \rightarrow a),
\end{equation*}
and, $ \iota $ being strong monoidal, the adjunction $ \iota \dashv f $ is a linear/non-linear model (its right adjoint $ f $ is then automatically lax monoidal), so $ {!}_{\iota} = \iota \circ f $ is a linear exponential comonad---a \emph{linear category} in Benton's terminology \cite[Cor.~8]{benton1994}. By contrast, composing $ \iota $ with the projection rather than its adjoint---the naive choice $ (a,u,x) \mapsto (u,u,u) $---is not even an interior operator, failing dereliction whenever $ a < u \wedge x $, as with $ (u,u,u) = i \nleq (0,1,1) $ at $ (0,1,1) \in \D(2) $. Thus the specialization furnishes, for free, an adjoint the general construction may lack.

The modality we take as our primary object of study arises not from an adjunction but directly from the additive structure of $ \D(\cf{H}) $. Informally, in linear logic, $!A$ means that we can ``use'' the argument $A$ as many times as we want, including none. If we were allowed infinitary formulas, we could express this as
\begin{equation*}
    i \Amp A \Amp (A \otimes A) \Amp (A \otimes A \otimes A) \Amp \ldots
\end{equation*}
However, in $ \D(\cf{H}) $, we have contraction, and so the above expression is reducible to $ i \Amp A $. Using the definitions of $i$ and $\Amp$ above we define
\begin{equation*}
    !(a,u,x) \coloneqq i \Amp (a,u,x) = (((u \wedge x) \rightarrow a) \wedge u, u, u) = ((x \rightarrow a) \wedge u,u,u).
\end{equation*}
Like $ {!}_{\iota} $, and unlike the first embedding, this $ ! $ preserves the unit, $ {!}\,i = (((1 \wedge 1) \rightarrow 1) \wedge 1, 1, 1) = i $, so promotion is unobstructed. It is nonetheless distinct from $ {!}_{\iota} $: for instance $ !(0,1,1) = (0,1,1) $ whereas $ {!}_{\iota}(0,1,1) = (0,0,0) $ in $ \D(2) $.
\begin{prop}\label{prop: bang}
    The $!$ operation defines an interior operator (comonad) on $\D(\cf{H}) $ satisfying $ !(a,u,x) \otimes {!(b,v,y)} \leq {!((a,u,x)} \otimes (b,v,y)) $.
\end{prop}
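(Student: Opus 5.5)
We verify three things directly from the three defining inequalities of the preorder on $\D(\cf{H})$: dereliction (${!}\alpha \leq \alpha$), monotonicity, and idempotence (${!}{!}\alpha \equiv {!}\alpha$). Together these show ${!}$ is an interior operator on the preorder. After that we check the stated monoidal inequality.

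Two of these come for free from the earlier results. Since ${!}\alpha = i \Amp \alpha$ is a binary meet by Proposition~\ref{prop: meet}, dereliction ${!}\alpha \leq \alpha$ and monotonicity ($\alpha\leq\beta \Rightarrow i\Amp\alpha \leq i\Amp\beta$) hold automatically. For idempotence, write $t = (x\rightarrow a)\wedge u$, so ${!}\alpha = (t,u,u)$. Then
\begin{equation*}
{!}{!}\alpha = ((u\rightarrow t)\wedge u, u, u).
\end{equation*}
In a Heyting algebra $(u\rightarrow t)\wedge u = u\wedge t = t$, since $t\leq u$. Hence ${!}{!}\alpha = {!}\alpha$ on the nose, which gives the comonad (interior operator) structure.

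For the monoidal inequality, let $\alpha=(a,u,x)$ and $\beta=(b,v,y)$. We compare
\begin{equation*}
{!}\alpha\otimes{!}\beta = \bigl(((x\rightarrow a)\wedge u)\wedge((y\rightarrow b)\wedge v),\ u\wedge v,\ u\wedge v\bigr)
\end{equation*}
with
\begin{equation*}
{!}(\alpha\otimes\beta) = \bigl(((x\wedge y)\rightarrow(a\wedge b))\wedge u\wedge v,\ u\wedge v,\ u\wedge v\bigr).
\end{equation*}
Conditions (i) and (ii) of the order are trivial, because the second coordinates agree and the third coordinates equal the second. Condition (iii) reduces, after meeting with $u\wedge v$, to
\begin{equation*}
(x\rightarrow a)\wedge(y\rightarrow b) \leq (x\wedge y)\rightarrow(a\wedge b).
\end{equation*}
By residuation this is equivalent to
\begin{equation*}
(x\rightarrow a)\wedge(y\rightarrow b)\wedge x\wedge y \leq a\wedge b,
\end{equation*}
which holds by modus ponens $x\wedge(x\rightarrow a)\leq a$ applied to each factor.

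None of these steps is a genuine obstacle. The one point needing care is to respect the preorder, not equality: the displayed formula for ${!}$ is already normalized, so all comparisons can be made in $\D^{\prime}(\cf{H})$ via Proposition~\ref{prop: PD iso Dprime}. The idempotence identity is the only place where Heyting structure beyond residuation enters, through $u\wedge(u\rightarrow t)=u\wedge t$.
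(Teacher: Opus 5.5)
Your proof is correct. Your idempotence argument and your monoidal inequality are the same computations as in the paper; you additionally derive $(x\rightarrow a)\wedge(y\rightarrow b)\leq(x\wedge y)\rightarrow(a\wedge b)$ from residuation, which the paper simply cites. The real difference is in dereliction and monotonicity. The paper checks these coordinatewise on $((x\rightarrow a)\wedge u,u,u)$, using $x\wedge(x\rightarrow a)\leq a$ and the antitonicity of $t\mapsto t\rightarrow a$ together with all three clauses of $\alpha\leq\beta$. You instead read ${!}\alpha=i\Amp\alpha$ as a greatest lower bound via Proposition~\ref{prop: meet}, so both properties follow formally. Your route is shorter and more conceptual, and you could push it further. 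The equivalence $i\Amp(i\Amp\alpha)\equiv i\Amp\alpha$ is also a formal consequence of the meet property, and both sides lie in $\D^{\prime}(\cf{H})$, where $\equiv$ is equality. So only the monoidal law genuinely needs a computation. The paper's route, by contrast, is self-contained and uses nothing beyond residuation. It therefore shows that the closed formula is an interior operator satisfying the monoidal law over any residuated meet-semilattice, where the formula of Proposition~\ref{prop: meet}, which needs joins, is unavailable. For the same reason your closing remark is slightly off: $u\wedge(u\rightarrow t)=u\wedge t$ holds in any residuated meet-semilattice, so it is not Heyting structure beyond residuation. In your argument, joins and distributivity enter only through the appeal to Proposition~\ref{prop: meet}.
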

\begin{proof}
    Write $ \alpha = (a,u,x) $ and $ \beta = (b,v,y) $; recall $ !(a,u,x) = ((x \rightarrow a) \wedge u, u, u) $ and that $ \cf{H} $ is a Heyting algebra.

    \emph{Dereliction, $ {!\alpha} \leq \alpha $.} The conditions $ u \leq u $ and $ u \wedge x \leq u $ are immediate, and for the third, $ (x \rightarrow a) \wedge u \wedge x \leq a \wedge u \wedge x $ since $ x \wedge (x \rightarrow a) \leq a $.

    \emph{Monotonicity.} Suppose $ \alpha \leq \beta $, so $ u \leq v $ and $ u \wedge y \leq x $. For $ {!\alpha} \leq {!\beta} $, i.e.\ $ ((x \rightarrow a) \wedge u, u, u) \leq ((y \rightarrow b) \wedge v, v, v) $, the conditions $ u \leq v $ and $ u \wedge v \leq u $ hold, and the third reduces to $ y \wedge u \wedge v \wedge (x \rightarrow a) \leq b $. Since $ u \wedge y \leq x $ and $ t \mapsto (t \rightarrow a) $ is antitone, $ u \wedge y \wedge (x \rightarrow a) \leq u \wedge y \wedge ((u \wedge y) \rightarrow a) \leq a $, hence $ u \wedge y \wedge (x \rightarrow a) \leq a \wedge (u \wedge y) \leq b \wedge (u \wedge y) \leq b $, using the third inequality of $ \alpha \leq \beta $. As $ y \wedge u \wedge v \wedge (x \rightarrow a) \leq u \wedge y \wedge (x \rightarrow a) $, the claim follows.

    \emph{Idempotence.} Put $ a' = (x \rightarrow a) \wedge u $, so $ {!\alpha} = (a', u, u) $. Then $ {!!\alpha} = ((u \rightarrow a') \wedge u, u, u) = (a', u, u) = {!\alpha} $, since $ u \wedge (u \rightarrow a') = u \wedge a' = a' $ (as $ a' \leq u $). Together with dereliction and monotonicity, this shows $ ! $ is an interior operator.

    \emph{Compatibility with $ \otimes $.} Computing both sides coordinatewise,
    \begin{align*}
        {!\alpha} \otimes {!\beta}
          &= \bigl( u \wedge v \wedge (x \rightarrow a) \wedge (y \rightarrow b),\ u \wedge v,\ u \wedge v \bigr) \\
          &\leq \bigl( u \wedge v \wedge ((x \wedge y) \rightarrow (a \wedge b)),\ u \wedge v,\ u \wedge v \bigr)
           = {!(\alpha \otimes \beta)},
    \end{align*}
    where the middle inequality holds because, in any Heyting algebra, $ (x \rightarrow a) \wedge (y \rightarrow b) \leq (x \wedge y) \rightarrow (a \wedge b) $.
\end{proof}

All together we have the following result.
\begin{thm}\label{thm: bang ILL}
    If $\cf{H} $ is a Heyting algebra, then the associated logic $\vDash_{\D(\cf{H})}$ satisfies the rules of Intuitionistic Linear logic with the ``of course'' modality.
\end{thm}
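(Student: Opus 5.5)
The plan is to check that $\D(\cf{H})$, read as an ordered algebra, is an algebraic model of intuitionistic linear logic with ``of course''. Concretely, $P\D(\cf{H})\cong\D^{\prime}(\cf{H})$ should be a commutative residuated meet-semilattice with top and unit $i$, carrying an interior operator $!$ whose image contains $i$ and is closed under $\otimes$, and on which $\otimes$ behaves like meet. Soundness of each ILL sequent rule is then checked by induction on derivations: a sequent $\Gamma\vdash B$ is interpreted as $\bigotimes\Gamma\leq B$, with the empty context read as $i$. Since $\D(\cf{H})$ is a preorder, naturality issues disappear, and by Proposition~\ref{prop: PD iso Dprime} it is harmless to pass to the poset reflection. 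The operations $\otimes$, $\multimap$, $\Amp$ and $!$ are monotone in the appropriate variances and respect $\equiv$, so they descend to $P\D(\cf{H})$.

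The multiplicative fragment comes first. Identity, cut, and exchange hold because $\leq$ is a preorder and $\otimes$ is commutative, associative, and has unit $i$. Left and right rules for $\otimes$ follow from monotonicity of $\otimes$, and the $i$ rules follow from $i$ being the unit. For $\multimap$ I invoke Proposition~\ref{prop: internal hom}. Right-introduction is residuation. Left-introduction follows from the counit $\beta\otimes(\beta\multimap\gamma)\leq\gamma$ together with monotonicity of $\otimes$. For the additives, Proposition~\ref{prop: meet} makes $\Amp$ a binary meet and $\top=(0,1,0)$ the top, which gives the rules for $\Amp$ and $\top$. If additive disjunction and $0$ are included, I would also need binary joins in $\D(\cf{H})$ and distributivity of $\otimes$ over them. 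The latter is automatic, because $\alpha\otimes(\cdot)$ is a left adjoint by Proposition~\ref{prop: internal hom}. The former I would obtain by exhibiting a join formula dual to Proposition~\ref{prop: meet}, say with second coordinate $u\vee v$, and checking it by hand. I flag this as a possible gap, since the statement may intend only the fragment treated so far.

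For the exponential I use Proposition~\ref{prop: bang}. Dereliction ($!A\leq A$) gives the left rule. Monotonicity and idempotence give promotion in the form: from $!\Gamma\vdash B$ infer $!\Gamma\vdash !B$. This step needs $\bigotimes !\Gamma$ to be fixed by $!$, i.e.\ $\bigotimes !\Gamma\leq !\bigotimes !\Gamma$, and that follows from the compatibility inequality $!\alpha\otimes!\beta\leq !(\alpha\otimes\beta)$ applied to $!$-fixed points, together with idempotence. For the empty context we use $!i=i$. The structural rules for $!$ are weakening $!A\leq i$ and contraction $!A\leq !A\otimes !A$. Contraction is already available for every formula, because $\otimes$ is coordinatewise meet. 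Weakening holds because $!\alpha=i\Amp\alpha\leq i$ by Proposition~\ref{prop: meet}.

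I expect the main obstacle to be the promotion step. One has to check carefully that the compatibility inequality, together with idempotence and $!i=i$, genuinely yields $\bigotimes!\Gamma\leq!(\bigotimes!\Gamma)$ for arbitrary finite contexts. I would argue this by induction on the length of $\Gamma$, using monotonicity of $!$ at each step. The second concern is the additive-disjunction clause, if it is required. There I would first try to verify the join candidate $\bigl(a\vee b,\,u\vee v,\,?\bigr)$ on $\D(2)$ and $\D(3)$ before attempting the general residuation argument.
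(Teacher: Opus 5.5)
Your proposal is correct and follows the paper's proof essentially step for step. Propositions~\ref{prop: internal hom} and~\ref{prop: meet} handle the multiplicative and additive connectives, Proposition~\ref{prop: bang} gives dereliction and the monoidal law, idempotence of $\otimes$ gives contraction, ${!\alpha}\leq i$ gives weakening, and promotion comes from ${\bigotimes{!\Gamma}}\leq{!\bigl(\bigotimes{!\Gamma}\bigr)}$ plus monotonicity (you are, if anything, more careful than the paper about the empty context via ${!i}=i$). The paper's proof does not treat the additive disjunction either, so it is no gap relative to that proof, and your guess essentially works: one checks directly that $\alpha\oplus\beta \equiv \bigl((a\vee b)\wedge z,\ u\vee v,\ z\bigr)$ with $z=(u\rightarrow x)\wedge(v\rightarrow y)$ is a join, and $(0,0,0)$ is the bottom.
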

\begin{proof}
    The residuated tensor $(\otimes, i)$ (Proposition~\ref{prop: internal hom}) and the meet $\Amp$ (Proposition~\ref{prop: meet}) interpret the multiplicative and additive connectives, so $\D(\cf{H})$ models $ILL$. For the modality: dereliction $!\alpha \leq \alpha$ and $!\alpha \otimes {!\beta} \leq {!(\alpha\otimes\beta)}$ are Proposition~\ref{prop: bang}; contraction $!\alpha \leq {!\alpha}\otimes{!\alpha}$ is immediate since $\otimes$ is idempotent; and weakening holds because $!(a,u,x) = ((x\rightarrow a)\wedge u, u, u) \leq i$. Promotion is immediate from Proposition~\ref{prop: bang}: for a context $\Gamma = \phi_1,\dots,\phi_n$, write $\Delta = {!\phi_1}\otimes\cdots\otimes{!\phi_n}$; idempotence and the monoidal law give $\Delta = {!!\phi_1}\otimes\cdots\otimes{!!\phi_n} \leq {!\Delta}$, so $\Delta \leq \alpha$ yields $\Delta \leq {!\alpha}$ by monotonicity.
\end{proof}

\section{The $\G$ Construction}

De Paiva's companion construction $\G(\cf{C})$ \cite{dePaiva1991} provides semantics for \emph{classical} linear logic. It agrees with $\D(\cf{C})$ on objects, and its morphisms differ only in that the backward map $F \colon U \times Y \to X$ is replaced by a map $Y \to X$ with no dependence on $U$. For a meet-semilattice $\cf{C}$ this strengthens the second clause of the order: $\G(\cf{C})$ has the same objects $(a,u,x)$ with $a \leq u \wedge x$ as $\D(\cf{C})$, but $(a,u,x) \leq (b,v,y)$ now requires
\begin{itemize}
    \item[(i)] $u \leq v$,
    \item[(ii$'$)] $y \leq x$, and
    \item[(iii)] $a \wedge (u \wedge y) \leq b \wedge (u \wedge y)$.
\end{itemize}
As (ii$'$) implies clause (ii) of $\D(\cf{C})$, every relation of $\G(\cf{C})$ holds in $\D(\cf{C})$; and unlike $\D(\cf{C})$, the preorder $\G(\cf{C})$ is already a partial order.

Over $\cf{C} = \mathbf{2}$ the arrow from $000$ to $111$ present in $\D(2)$ is lost, and $000$ is no longer identified with $001$. The result is the pentagon $N_5$ of Figure~\ref{fig: G(2)}, in which $000$ is incomparable to $111$.
\begin{figure}[h]
$$
\begin{tikzcd}[row sep = 15]
 & \top = 010 &  \\
b = 000 \ar{ur} &  & i = 111 \ar{ul}   \\
 &  & a = 011 \ar{u} \\
 &  \bot = 001 \ar{uul} \ar{ur} &
\end{tikzcd}
$$
\caption{The pentagon $\G(2) \cong N_5$.}\label{fig: G(2)}
\end{figure}

The multiplicative structure of $\G$ specializes de Paiva's Dialectica tensor \cite{dePaiva1991}, whose middle component is the exponential $X^V \times Y^U$. Writing the exponential with the Heyting residual, $X^V = (V \rightarrow X)$, this is
\begin{equation*}
    (a,u,x) \otimes (b,v,y) = \bigl( a \wedge b,\ u \wedge v,\ (v \rightarrow x) \wedge (u \rightarrow y) \bigr),
\end{equation*}
with unit $i = (1,1,1)$. This is the very tensor underlying $\D$: the coarser order of $\D$ collapses the residual third coordinate to the coordinatewise meet $x \wedge y$ used earlier, whereas the stricter order of $\G$ does not. Specializing to $\cf{C} = \mathbf{2}$, the tensor and internal-hom on the five objects of $\G(2)$ are
\begin{equation*}
\begin{array}{c | c c c c c}
\otimes & \bot & b & a & \top & i \\
\hline
\bot & \bot & \bot & \bot & \bot & \bot \\
b & \bot & \bot & b & b & b \\
a & \bot & b & a & \top & a \\
\top & \bot & b & \top & \top & \top \\
i & \bot & b & a & \top & i
\end{array}
\hskip 3 em
\begin{array}{c | c c c c c}
\multimap & \bot & b & a & \top & i \\
\hline
\bot & \top & \top & \top & \top & \top \\
b & b & \top & b & \top & \top \\
a & \bot & b & i & \top & i \\
\top & \bot & b & \bot & \top & \bot \\
i & \bot & b & a & \top & i
\end{array}
\end{equation*}
Unlike $\D(2)$, whose tensor is coordinatewise meet and hence idempotent, $\G(2)$ fails contraction: $b \otimes b = \bot$, while $b \nleq \bot$ since $\bot$ is the least element. Thus a single Dialectica tensor validates contraction in $\D$ but refutes it in $\G$, the difference owing entirely to the morphism condition.

Whereas $\PR\D(\Set) = \PR\D(2)$ under choice (Theorem~\ref{PD(SET) = PD(2)}), the analogous identification fails outright for $\G$, even over $\mathrm{ZFC}$.
\begin{thm}\label{thm: G Set neq G2}
    Over $\mathrm{ZF}$ alone, $\PR\G(\Set)$ is not equivalent to $\PR\G(2)$.
\end{thm}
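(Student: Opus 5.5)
The plan is to show that $\PR\G(\Set)$ has strictly more than five elements, so it cannot be order-isomorphic to $\PR\G(2) \cong N_5$. Everything will be done with finite sets and explicitly given functions, so no form of choice enters and the argument is valid over $\mathrm{ZF}$.

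Throughout I use the set-theoretic reading of the $\G$ order. A morphism $(\alpha,U,X) \to (\beta,V,Y)$ is a pair $f\colon U\to V$, $F\colon Y\to X$ such that $(u,F(y))\in\alpha$ implies $(f(u),y)\in\beta$.

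\emph{Step 1: the five objects over $\{0,1\}$ stay distinct.} As in the proof of Theorem~\ref{PD(SET) = PD(2)}, consider the objects of $\G(\Set)$ with $U,X\in\{0,1\}$. Between two such objects, the only possible maps $f$ and $F$ are the unique maps (or none) between $0$ and $1$. So the existence of a morphism is decided exactly as in $\G(2)$, and these objects form a copy of the pentagon $\bot=001$, $b=000$, $a=011$, $i=111$, $\top=010$. In particular they are pairwise inequivalent in $\PR\G(\Set)$.

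\emph{Step 2: a sixth object.} Let $\gamma = (\Delta, 2, 2)$ with $\Delta=\{(0,0),(1,1)\}$ the diagonal. In $\D(\Set)$ this object falls in class~(3) and is equivalent to $a$, via the $u$-dependent backward map $F(u,\ast)=1-u$. In $\G$ that map is forbidden, and this is the whole point. I would verify the comparisons in turn.
\begin{itemize}
\item $a\le\gamma$: take any $f\colon1\to2$ and $F\colon2\to1$. The antecedent is never satisfied because $a$'s relation is empty.
\item $\gamma\not\le a$: a constant $F\colon1\to2$ with value $c$ gives $(c,c)\in\Delta$. The antecedent then holds at $u=c$, but the consequent fails because $a$'s relation is empty.
\item $\gamma\le i$: the consequent is always true.
\item $i\not\le\gamma$: for $f(\ast)=u_0$ we would need $(u_0,y)\in\Delta$ for both $y\in 2$, which fails.
\item $\gamma\not\ge\bot$ and $\gamma\not\ge b$ in the sense that $\gamma\not\le\bot$ and $\gamma\not\le b$: there is no map $2\to\emptyset$.
\item $\top\not\le\gamma$: there is no map $F\colon 2\to\emptyset$.
\end{itemize}
Hence $\gamma$ is inequivalent to each of the five, and in fact $\bot<a<\gamma<i<\top$ is a five-element chain.

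\emph{Step 3: conclusion.} $\PR\G(\Set)$ has at least six elements, and also a chain of five elements, whereas the longest chain in $N_5$ has four. So no order-isomorphism with $\PR\G(2)$ exists. Nothing used choice, which gives the claim over $\mathrm{ZF}$ alone and hence also over $\mathrm{ZFC}$.

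The only delicate point is Step~1, namely checking that the $\{0,1\}$-objects really reproduce the $\G(2)$ order inside $\G(\Set)$. I expect this to be a routine case check, since all the maps involved are forced.
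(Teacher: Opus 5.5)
Your proof is correct, but it is organised differently from the paper's. Your $\gamma=(\Delta,2,2)$ is exactly the $n=2$ member of the paper's family $(\Delta n,n,n)$. Your checks $\gamma\le i$ and $i\not\le\gamma$ are the paper's two arguments specialised to $m=2$, $n=1$, and the mechanism is the same: the backward map may not depend on $u$.

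The difference is in what gets compared. The paper never compares these objects with the images of $\G(2)$. Instead it uses a pigeonhole argument on $g\colon m\to n$ to show that the classes $[(\Delta n,n,n)]$, $n\ge 1$, form a strictly descending chain. So $\PR\G(\Set)$ is infinite, which rules out equivalence with any finite poset, not just the five-element $\PR\G(2)$.

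You add a single new object to an embedded copy of the pentagon. The cost is that you need your Step~1, that $\G(2)$ sits inside $\G(\Set)$ as a full sub-order so its five elements stay distinct. You also have to separate $\gamma$ from each of the five individually, including the check $\gamma\not\le a$, which the paper never needs. In exchange, your whole argument is a finite case check, and it pins down where the new element sits: strictly between $a=[(0,1,1)]$ and $i$. That is the object $\D$ identifies with $a$ through the $u$-dependent map $F(u,\ast)=1-u$.

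One small point: the theorem is stated in terms of equivalence, not isomorphism. Before concluding from your element count, say explicitly, as the paper does, that equivalent poset categories are isomorphic. This holds because an equivalence induces a bijection on isomorphism classes, and each isomorphism class in a poset is a single element.
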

\begin{proof}
    Two poset categories are equivalent if and only if they are isomorphic, so it suffices to exhibit an infinite descending chain in $\PR\G(\Set)$, since $\PR\G(2)$ has only five elements. For $n \in \mathbb{N}$ let $(\Delta n, n, n)$ be the object of $\G(\Set)$ with $n = \{0, \ldots, n-1\}$ and $\Delta n = \{(m,m) : m \in n\}$.

    Fix $n < m$. There is no morphism $(f,g) \colon (\Delta n, n, n) \to (\Delta m, m, m)$: for arbitrary $f \colon n \to m$ and $g \colon m \to n$, since $n < m$ there are $a \neq b$ in $m$ with $g(a) = g(b)$, whence $(g(a), g(a)), (g(a), g(b)) \in \Delta n$, yet at least one of $(f(g(a)), a)$ and $(f(g(a)), b)$ lies outside $\Delta m$, violating the subobject condition. Conversely there is always a morphism $(f,g) \colon (\Delta m, m, m) \to (\Delta n, n, n)$: take $f(a) = \min\{a, n-1\}$ and $g = \mathrm{id}_n$; if $(a, g(b)) = (a,b) \in \Delta m$ then $a = b \leq n-1$, so $(f(a), b) = (a,b) \in \Delta n$. Hence $[(\Delta m, m, m)] < [(\Delta n, n, n)]$ in $\PR\G(\Set)$ whenever $n < m$, giving the descending chain of Figure~\ref{fig: G chain}, which begins at $i = [(\Delta 1,1,1)]$ and is bounded below by $[(0,1,1)]$.
\end{proof}
\begin{figure}[h]
$$
\begin{tikzcd}[row sep = scriptsize]
 & {[0,1,0]} &  \\
{[0,0,0]} \ar{ur} &  & i = [\Delta 1,1,1] \ar{ul}   \\
 &  & {[\Delta 2,2,2]} \ar{u} \\
 &  & {[\Delta 3,3,3]} \ar{u} \\
 &  & \vdots \ar{u} \\
 &  & {[0,1,1]} \ar{u} \\
 &  {[0,0,1]} \ar{uuuuul} \ar{ur} &
\end{tikzcd}
$$
\caption{The infinite descending chain in $\PR\G(\Set)$.}\label{fig: G chain}
\end{figure}

The failure of $\PR\G(\Set) = \PR\G(2)$ and the shape of the $\G$-tensor are two faces of one fact. The intuitionistic condition~(ii) of $\D$ is permissive and identifies a great deal---collapsing $\D(\Set)$ onto the four elements of $\D^{\prime}(2)$ under choice, and the residual third coordinate of the tensor onto a plain meet, so that $\D(2)$ satisfies contraction. The classical condition~(ii$'$) of $\G$ identifies far less: the objects $(\Delta n, n, n)$ that $\D$ would run together stay distinct---whence the descending chain---and the residual likewise survives in the tensor, so that contraction fails. The two anomalies of $\G$ are thus one: its morphisms identify too little to collapse them.

\section{Conclusion and Future Work}

We have specialized the Dialectica construction to partial orders and reproduced its basic theory algebraically, obtaining functorial embeddings of Heyting algebras into residuated lattices. Along the way we obtained several results specific to the specialization: de Paiva's second embedding $\iota(u) = (u,u,u)$ acquires a definable right adjoint (Proposition~\ref{prop: G adjoint}), giving a linear exponential comonad; a single Dialectica tensor validates contraction in $\D$ but refutes it in $\G$, the difference owing to the morphism condition alone; and, over $\mathrm{ZF}$, $\D^{\prime}(2) = P\D(\Set)$ is equivalent to the Axiom of Choice (Theorem~\ref{PD(SET) = PD(2)}). Several questions remain, however.

In investigating the algebraic case, we have converged on Heyting algebras, as suitable structures over which the Dialectica construction should occur. However, in \cite{dePaiva1991} one considers not bi-Cartesian closed categories, but categories $\cf{C} $ which are finitely complete, locally Cartesian closed, and with stable and disjoint coproducts. There is unfinished work to reconcile the different assumption sets: E.g.\ Does the poset reflection on these categories with weaker structure still result in Heyting algebras? Moreover, we have barely scratched the surface of understanding the algebras $ \D^{\prime}(\mathbf{H}) $ (and especially $\G(\cf{H})$), and it will be interesting to see if new results for the algebraic specialization---such as the residual/adjoint $f(a,u,x) = u \wedge (x \rightarrow a) $ to $ \iota(u) = (u,u,u)$---lift to the general Dialectica construction. 

In algebraic logic, one often considers a logic associated to a class of algebras. For example, the variety of Heyting algebras $\mathcal{H} $ provides semantics for intuitionistic logic in the following sense: We say $\phi \vDash_{\mathcal{H}} \psi$ if and only if, for each Heyting algebra $\cf{H} \in \mathcal{H} $, $\phi \vDash_{\cf{H}} \psi$. By definition, $\vDash_{\mathcal{H}} $ is the infimum of the logics defined by Heyting algebras, and it is the logic $\vDash_{\cf{H}}$ where $\cf{H} $ is the free Heyting algebra over the propositional atoms. Let $\mathcal{C}$ be the class of all (small) finitely complete locally Cartesian closed categories with stable and disjoint products and $\D(\mathcal{C}) $ all associated Dialectica Categories. From \cite{dePaiva1991}, ${\vDash_{\D (\mathcal{C})}} $ satisfies all the rules of $ILL$ but is $\vDash_{\D(\mathcal{C})}$ equal to $ILL$? Moreover, is there a $\cf{C} \in \mathcal{C}$, such that ${\vDash_{\D(\cf{C})}} = {\vDash_{\D(\mathcal{C})}}$? 

Let $\mathcal{C} $ be the class of all bi-Cartesian closed categories. Is ${\vDash_{\D (\mathcal{C})}} = {\vDash_{\D (\mathcal{H})}} $? Note that a $\cf{C} \in \mathcal{C} $ such that $\vDash_{\D(\cf{C})}$ does not satisfy contraction would disprove this claim. While such $\cf{C} $ are thought to exist, we do not currently have a witness. A model of $ZF$ that doesn't satisfy choice may seem like a good candidate but the function
\begin{equation*}
    F(u,x_{1},x_{2}) =
\begin{cases}
	x_{1} & \text{if }(u,x_{1}) \notin \alpha \\
	x_{2} & \text{otherwise}.
\end{cases}
\end{equation*}
defined for a Dialectica object $\alpha \subseteq U \times X $ is first-order definable over the signature $\{\in \} $ and so can be defined in $ZF$ via the Axiom Schema of Specification. And $F$ along with the Diagonal function $ f \colon U \to U \times U$ determines a morphism from $\alpha$ to $\alpha \otimes \alpha $.

From Theorem \ref{PD(SET) = PD(2)}, if $\Set$ is a model of $ZFC$, then $P\D P(\Set) = P \D(\Set) $, and one may wonder if over $\mathcal{C} $, $P \D P (\cdot) = P \D (\cdot) $? From Theorem \ref{PD(SET) = PD(2)} again this cannot hold in general since there are models of $ZF$ that do not satisfy choice. It would be interesting to characterize the subclass of $\mathcal{C} $ over which this result holds. Generalizations/categorifications of Theorem \ref{PD(SET) = PD(2)} and the above mentioned function $F$ may help narrow the search space for categories $\mathbf{C}$ where $\vdash_{\D(\mathbf{C})}$ does not satisfy contraction.

\printbibliography

\end{document}